

\documentclass[ECP]{ejpecp} 


\usepackage[T1]{fontenc}
\usepackage[utf8]{inputenc}


\usepackage[numeric,initials,nobysame,msc-links,abbrev]{amsrefs}
\usepackage{subcaption}
\usepackage{floatrow}
\usepackage{mathdots}
\usepackage{tikz}
\usepackage[capitalise]{cleveref}

\usetikzlibrary{patterns}


\SHORTTITLE{Sensitive bootstrap percolation second term}

\TITLE{Sensitive bootstrap percolation second term\support{Supported
    by the Austrian Science Fund (FWF): P35428-N and ERC Starting Grant 680275 ``MALIG.''}\
} 



\AUTHORS{%
Ivailo~Hartarsky\footnote{Technische Universit\"at Wien, Institut für Stochastik und Wirtschaftsmathematik, Wiedner Hauptstra\ss e 8-10, A-1040, Vienna, Austria. \EMAIL{ivailo.hartarsky@tuwien.ac.at}}}


\KEYWORDS{bootstrap percolation; infection time} 

\AMSSUBJ{60C05; 60K35} 

\SUBMITTED{March 29, 2023} 
\ACCEPTED{NA} 


\ARXIVID{2303.14910} 


\VOLUME{0}
\YEAR{2020}
\PAPERNUM{0}
\DOI{10.1214/YY-TN}


\ABSTRACT{In modified two-neighbour bootstrap percolation in two dimensions each site of $\mathbb Z^2$ is initially independently infected with probability $p$ and on each discrete time step one additionally infects sites with at least two non-opposite infected neighbours. In this note we establish that for this model the second term in the asymptotics of the infection time $\tau$ unexpectedly scales differently from the classical two-neighbour model, in which arbitrary two infected neighbours are required. More precisely, we show that for modified bootstrap percolation with high probability as $p\to0$ it holds that
\[\tau\le \exp\left(\frac{\pi^2}{6p}-\frac{c\log(1/p)}{\sqrt p}\right)\]
for some positive constant $c$, while the classical model is known to lack the logarithmic factor.}


\newcommand{\cC}{\mathcal{C}}
\newcommand{\cD}{\mathcal{D}}
\newcommand{\cE}{\mathcal{E}}

\newcommand{\cI}{\mathcal{I}}
\newcommand{\cJ}{\mathcal{J}}

\newcommand{\cO}{\mathcal{O}}

\newcommand{\bbP}{{\ensuremath{\mathbb P}} }

\newcommand{\bbZ}{{\ensuremath{\mathbb Z}} }


\begin{document}



\section{Introduction}
\emph{Modified two-neighbour bootstrap percolation} is a monotone cellular automaton on $\bbZ^2$, which may be defined as a growing subset of $\bbZ^2$ as follows. Let $A=A_0\subset\bbZ^2$ be an arbitrary set of initially \emph{infected} sites. For all nonnegative integer $t$ we set
\[A_{t+1}=A_t\cup\left\{x\in\bbZ^2:A_t\cap\{x+e_1,x-e_1\}\neq\varnothing,A_t\cap\{x+e_2,x-e_2\}\neq\varnothing\right\},\]
where $e_1,e_2$ is the canonical basis of $\bbZ^2$. The more \emph{classical two-neighbour bootstrap percolation} is defined similarly, by setting $A'_0=A$ and 
\[A'_{t+1}=A'_t\cup \left\{x\in\bbZ^2:\left|A'_t\cap\{x+e_1,x+e_2,x-e_1,x-e_2\}\right|\ge 2\right\}.\]
Given $A\subset A_t$, one denotes its \emph{closure} by $[A]=\bigcup_{t\ge0}A_t$ and $[A]'=\bigcup_{t\ge 0}A_t'$. One is interested in the behaviour of these models as $A$ is taken at random according to the product Bernoulli measure $\bbP_p$ so that $\bbP_p(x\in A)=p$ for all $x\in\bbZ^2$. It is classical \cite{VanEnter87} that for any $p>0$, $\bbP_p$-almost surely $[A]=[A]'=\bbZ^2$, so it is natural to consider the (random) \emph{infection time}
\begin{align*}\tau&{}=\min\left\{t\ge 0:0\in A_t\right\},&\tau'&{}=\min\left\{t\ge 0:0\in A_t'\right\}.\end{align*}

Traditionally, the modified model is perceived as a technically simpler but morally identically behaved variant of the classical one. As a result, it has been usually treated simultaneously with or before the classical one by the same means. It was proved in \cites{Holroyd03,Gravner08} that for some constant $c>0$
\begin{equation}
\label{eq:GH}
\lim_{p\to 0}\bbP_p\left(\exp\left(\frac{\lambda+o(1)}{p}\right)\le \tau\le \exp\left(\frac{\lambda}{p}-\frac{c}{\sqrt p}\right)\right)=1\end{equation}
with $\lambda=\pi^2/6$ and the same holds for $\tau'$ with $\lambda'=\pi^2/18$. The second order term was important for explaining the \emph{bootstrap percolation paradox}. This is the ongoing phenomenon that simulation-based conjectures on these models were systematically subsequently disproved by rigorous results.

While for the modified model the lower bound in Eq.~\eqref{eq:GH} remains the state of the art, for the classical model it was improved \cite{Hartarsky19}, so that for some $c'>0$
\begin{equation}
\label{eq:HM}\lim_{p\to0}\bbP_p\left(\exp\left(\frac{\lambda'}{p}-\frac{1}{c'\sqrt p}\right)\le \tau'\le \exp\left(\frac{\lambda'}{p}-\frac{c'}{\sqrt p}\right)\right)=1,\end{equation}
making it natural to expect an analogous result for the modified model. Indeed, further studies of simplified so-called local versions of these processes were carried out in \cites{Gravner09,Bringmann12} leading to lower bounds between those of Eqs.~\eqref{eq:GH} and~\eqref{eq:HM} with identical error terms for both processes. Moreover, already Gravner and Holroyd \cite{Gravner08} expended some effort to obtain a decent explicit, albeit non-optimal, value for $c$ in Eq.~\eqref{eq:GH} given by $\sqrt2+o(1)$, since it was simpler to keep track of error terms than for the classical model. 

The only sign of discrepancy between the two models we are aware of arose on the occasion of proving a subsequent improved lower bound for classical bootstrap percolation preceding Eq.~\eqref{eq:HM} and the local ones of \cite{Bringmann12}. Namely, Gravner, Holroyd and Morris \cite{Gravner12} wrote ``Surprisingly, it appears that our proof does
not extend directly to the `modified' bootstrap percolation model'' (also see \cite{Gravner12}), while it did apply to other similar models such as the Frob\"ose and $k$-cross ones \cite{Gravner12}*{Theorem 20}. The subsequent work \cite{Hartarsky19} proving Eq.~\eqref{eq:HM} faced the same difficulty and simply ignored the modified model.

In view of the above, the following result comes as quite a surprise.
\begin{theorem}
\label{th:main}
For the modified two-neighbour bootstrap percolation model in two dimensions there exists $c>0$ such that
\[\lim_{p\to0}\bbP_p\left(\tau\le \exp\left(\frac{\pi^2}{6p}-\frac{c\log(1/p)}{\sqrt p}\right)\right)=1.\]
\end{theorem}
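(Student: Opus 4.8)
The goal is to beat the classical upper bound $\exp(\pi^2/(6p)-c/\sqrt p)$ by improving the error term to $-c\log(1/p)/\sqrt p$. So I need to find an *easier-to-achieve* growth mechanism for the modified model than the naive "grow one droplet from a single seed" strategy, saving a $\log(1/p)$ factor in the exponent.

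Key idea: the bottleneck in the Gravner–Holroyd argument is crossing the critical window of droplet sizes around $\ell \sim 1/p$, where the expected cost of a single "step" (adding a new row/column to a growing rectangle) becomes order 1. The classical model gets $-c/\sqrt p$ by being clever near this window. For the modified model, the crucial point should be that **corners help**: a modified rectangle can grow by a "helping hands" / double-growth mechanism where two nearly-full rows/columns meeting at a corner cooperate. So I should use multiple interacting droplets rather than one.

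Structure of the argument:
- Set $T = \exp(\pi^2/(6p) - c\log(1/p)/\sqrt p)$. Fix a large box $\Lambda$ (of size roughly $\exp(\Theta(1/p))$) around the origin. With probability $\to 1$, it suffices to show *some* droplet in $\Lambda$ grows to engulf the origin within time $T$.
- Recall the standard "hierarchy" estimate for the probability that a given seed grows to a rectangle of size $\sim 1/p$: it's roughly $\exp(-2\lambda/p + O(\sqrt p^{-1}))$ per the Holroyd-type sharp threshold, with $\lambda = \pi^2/6$ (noting the factor structure: escaping to size $n$ costs $\approx \prod$ of row-addition probabilities $\approx \exp(-2\int_1^{\Theta(1/p)}\beta(e^{-pk})dk/p)$ where $\beta$ is the relevant potential). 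The point is to show that conditionally on one such "large-enough seed" existing, growth to fill $\Lambda$ (hence to infect $0$) can be done in *subdominant* time — time $\exp(o(1/p))$ — once a droplet of side $\gg 1/p$ is present, because such a droplet grows at essentially linear speed.
- The new input: **two (or a bounded/polynomial number of) independent large droplets help each other cross the critical window faster**. Precisely, when a growing modified rectangle is at side-length $\ell$ near $1/p$, the classical single-droplet bound pays, for each of the $\sim 1/p$ growth steps across the window, an extra cost because consecutive rows must each be "doubly infected." But if a *second* droplet sits adjacent and contributes its already-infected sites along one side, the double-infection requirement on the far rows is partly waived — a "helping hand" from a neighbouring infected half-plane-like region converts the effective dynamics near that side to something closer to one-neighbour (east-growth becomes free once the southern neighbour row is full). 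Quantitatively this should save a constant factor $>1$ in front of $\lambda$ over the *window portion* of the integral, or equivalently replace one factor of $\beta(\cdot)$ by something smaller, and that gain — spread over a window of width $\Theta(1/\sqrt p)$ — is exactly of order $\log(1/p)/\sqrt p$ in the exponent. (Here is where the $\log$ comes in: the number of "partnering" droplets one can harvest in the relevant spatial scale is polynomial in $1/p$, contributing a $\log(1/p)$, and the relevant length scale over which this pays off is $1/\sqrt p$.)

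So the concrete steps are:
\begin{enumerate}
\item Fix $T=\exp(\pi^2/(6p)-c\log(1/p)/\sqrt p)$ and an ambient box $\Lambda=[-R,R]^2$ with $\log R$ polynomial in $1/p$; reduce to finding one successful growth scenario in $\Lambda$.
\item Lay out a grid of $\mathrm{poly}(1/p)$ potential "seed locations" at mutual distance $\sim 1/p^2$ (or an appropriate mesoscopic scale) and bound below the probability that at a constant fraction of them, independently, a "decent seed" (a fully infected square of side $\Theta(\log(1/p))$, say) appears — this happens with probability at least, say, $1/2$ for the whole collection if the grid has $\gg (1/p)^{C}$ cells.
\item For a single decent seed, use the Holroyd/Gravner–Holroyd hierarchy decomposition to lower-bound the probability of growing to side $\varepsilon/p$ in time $\exp(o(1/p))$: this probability is $\exp(-(2\lambda/p)(1-\delta) + O(1/\sqrt p))$ for $\delta\to 0$ as $\varepsilon\to0$ — standard, giving the *dominant* term.
\item Crucially improve the crossing of the window $\ell\in[\varepsilon/p,\, C/p]$ — the "bottleneck" of the integral — by using that *two adjacent* partially grown droplets, one to the south and one growing east, satisfy a one-neighbour-like rule along their shared boundary. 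Quantify the probability gain: the window portion of the Gravner–Holroyd integral $\int \beta(e^{-p\ell})\,d\ell$ is replaced by a smaller quantity, saving $\Theta(\log(1/p))/\sqrt p$ in the exponent when combined over the $\mathrm{poly}(1/p)$ available partner droplets. This is the step where one must be careful that: (a) partner droplets are close enough to actually merge (distance $\le$ the window's width $\sim 1/\sqrt p$ in linear terms, or an $O(1)$ multiple of it after rescaling), (b) the "double counting" of seed sites used in different droplets is controlled (a short-range dependency / van den Berg–Kesten-type argument, or simply a union bound with room to spare since $T$ is still super-polynomially large), and (c) the time spent is still $\le T$.
\item Conclude: one of the $\mathrm{poly}(1/p)$ merged droplet systems grows, within time $\exp(o(1/p)) \ll T$, to a droplet of side $\gg 1/p$, which then (linear growth regime, standard) fills $\Lambda$ and infects $0$ within further time $\exp(o(1/p))$. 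Hence $\tau \le T$ whp.
\end{enumerate}

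**Main obstacle.** The genuinely delicate step is (4): making precise the claim that *cooperation between neighbouring droplets* beats the single-droplet crossing of the critical window *by exactly the right order* ($\log(1/p)/\sqrt p$). One must (i) correctly identify that in the *modified* rule a full row to the south of a rectangle makes eastward growth of that rectangle behave like one-neighbour growth (this is precisely the asymmetry the modified rule possesses that the classical one lacks — and plausibly the reason the Gravner–Holroyd–Morris method "does not extend"), (ii) compute the resulting modified potential/integral and verify the saving, and (iii) handle the combinatorics/probability of having enough mutually compatible partner droplets with independent-enough seeds so that the $\mathrm{poly}(1/p)$ attempts genuinely multiply the success probability (here a careful second-moment or explicit disjoint-witness argument is needed to avoid the usual FKG/independence pitfalls). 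Everything else — the hierarchy machinery, the linear-growth "filling" step, the reduction to a finite box — is standard and can be cited from \cite{Holroyd03,Gravner08} and their descendants.
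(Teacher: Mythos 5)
There is a genuine gap, and it sits exactly where you flag it: step (4) is the entire mathematical content of the proposal, it is not proved, and its quantitative accounting cannot produce the claimed order. First, the entropy of selecting one ``partner'' droplet among $\mathrm{poly}(1/p)$ candidates contributes only an additive $O(\log(1/p))$ to the exponent; nothing in your scheme multiplies this by $1/\sqrt p$. Second, any saving that is confined to the critical window is at most of order $1/\sqrt p$: the contribution of a window of width $\Theta(1/\sqrt p)$ around the critical length to the Gravner--Holroyd integral is $\Theta(1/\sqrt p)$, so even a constant-factor improvement there reproduces the known $c/\sqrt p$ term and cannot yield a logarithm (a gain over the whole window $[\varepsilon/p,C/p]$ of a constant factor would be $\Theta(1/p)$ and would contradict the matching lower bound in Eq.~\eqref{eq:GH}). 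Third, the cooperation mechanism itself is energetically incoherent: a partner droplet large enough to ``waive the double-infection requirement'' along an entire side of a near-critical rectangle must itself be of mesoscopic-to-critical size, and the probability of finding such an object within distance $O(1/\sqrt p)$ of the growing droplet is $\exp(-\Theta(1/p))$, which would roughly double the leading term $2\lambda/p$ rather than improve the second one; whereas a cheap partner (a seed of polylogarithmic side) only assists along $O(\mathrm{polylog}(1/p))$ rows and cannot alter the window-crossing cost at order $1/\sqrt p$, let alone $\log(1/p)/\sqrt p$. A second-moment or disjoint-witness argument, as you suggest, cannot repair this: the obstruction is the first-moment cost of the helpers, not a correlation issue.

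For comparison, the paper uses no droplet merging and gains nothing at the critical scale. It grows a single droplet and, on each dyadic scale $\ell^{(n)}=2^n/\sqrt q$ with $\ell^{(n)}$ ranging between $p^{-1/2}$ and roughly $p^{-3/4}$ (all far below critical), inserts $m=\Theta(1/\sqrt p)$ ``sideways'' steps (Definition~\ref{def:sideways}): each such step replaces one row--column pair of the diagonal mechanism by a single infected corner site which, in the modified model, triggers the growth of both a horizontal and a vertical line. The $\log(1/p)$ cost of that single site is offset by the two saved lines plus the positional entropy of the deviation, leaving a net $\Theta(1)$ gain per step (Lemmas~\ref{lem:entropy} and~\ref{lem:energy}); accumulating $\Theta(1/\sqrt p)$ such gains on each of $\Theta(\log(1/p))$ scales gives the $\log(1/p)/\sqrt p$ correction, after which a standard Aizenman--Lebowitz argument converts the filling probability into the bound on $\tau$. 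Your intuition that the modified rule's ``one infection grows two lines'' asymmetry is the source of the discrepancy is correct, but the place where it pays off is the subcritical scales via an entropy-versus-energy balance repeated over $\log(1/p)$ scales, not a cooperative crossing of the critical window.
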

Let us remark that analogous results hold for the critical probability and critical length (see \cite{Gravner08}) or the local version of modified bootstrap percolation (see \cite{Gravner09}) sometimes considered in the literature and they follow in the same way.

It is further good to note that the constant $\lambda$ is known to be the result of a nontrivial optimisation problem \cite{Duminil-Copin23} in some generality including all models mentioned above. Its value is therefore very sensitive to the exact microscopic details of the model. However, the leading order scaling itself is known to admit a simple universal form in much greater generality \cite{Bollobas23}. In view of this and the similarity so far exhibited by models for which information about the second term is available, one might hope for a similar universal form for this corrective term, at least in some generality within the so-called isotropic class. Unfortunately, Theorem~\ref{th:main} strongly indicates that this term is far too sensitive to be captured in general, if even models as similar as the classical and modified ones differ at that level.

\section{Heuristics}
While the proof of Theorem~\ref{th:main} is very simple, we find it important to highlight the somewhat subtle intuition underlying it. The leading order term $\lambda/p$ in Eq.~\eqref{eq:GH} can be understood simply as the limit of $-\frac12\sum_{i=1}^\infty\log (1-(1-p)^i)^2$, which corresponds to the diagonal growth mechanism depicted in Fig.~\ref{fig:diag}. In order to improve it for the classical model, Gravner and Holroyd \cite{Gravner08} considered deviations from the diagonal (see Fig.~\ref{fig:side}) on the critical scale $1/p$ in order to win some entropy. This comes at an energetic price, so the length $1/\sqrt p$ of these deviations was chosen carefully, so that the entropic gain cancels the energetic cost, but not by far, since afterwards entropy itself becomes smaller, deteriorating the result. The same can be done for the modified model to show the upper bound in Eq.~\eqref{eq:GH}.

However, \cite{Gravner08} gives yet another reason for the upper bound in Eq.~\eqref{eq:GH} to hold for the modified model (for the purpose of easily obtaining a good constant). Namely, on scales $1/\sqrt{p}$ (much smaller than the critical one) one can already win an entropy $c/\sqrt p$, using the minimal possible number of infections, but allowing their position to be arbitrary. This hints at what will be the key to Theorem~\ref{th:main}. Namely, on any scale between $1/\sqrt p$ and $1/p$ one can win a second order contribution of $1/\sqrt{p}$, which will accumulate over all scales to give the desired result. 

Finally, let us point out the exact spot witnessing the difference between the modified and classical bootstrap percolation and ultimately leading to the logarithmic correction. When comparing the growth mechanism of Fig.~\ref{fig:side} to Fig.~\ref{fig:diag}, four contributions arise. Firstly, the cost of deviating too much from the diagonal, since the horizontal growth is harder. Secondly, the cost of the infection used to switch direction. Thirdly, the cost of the line free of infections. Fourthly, the fact that the switching infection induces the growth of two lines (a vertical and a horizontal one). The first contribution is going to be subdominant for our purposes (our deviations will be small) and so is the third one (we will consider scales smaller than the critical one, making the absence of infections likely). The second one is simply $p$ per such deviation. It is the fourth contribution that was neglected in \cite{Gravner08} and is crucial to us. Indeed, in both the classical and modified models a single infection can be used to grow two lines. However, in the classical one that is what should be done typically on the critical scale, while for the modified one it is one infection per line. Thus, each line normally costs twice as much for the modified model, so growing two lines with a single direction switching infection is more beneficial. Thanks to this, we can afford to have more such deviation steps, so that together with the entropy contribution we can compensate the cost of these single infections. At this point it only remains to optimise the choice deviation lengths as a function of the scale. We will make no attempt to optimise the constant $c$ in Theorem~\ref{th:main}, but rather simplify the presentation, as the right constant cannot be obtained this way. Yet, we believe Theorem~\ref{th:main} to be sharp up to the value of $c$.

\section{Preliminaries}
For integers $x_1\le x_2$ and $y_1\le y_2$, we define the \emph{rectangle}
\[R(x_1,y_1;x_2,y_2)=\{x_1,\dots,x_2\}\times\{y_1,\dots,y_2\}.\]
We say that a rectangle $R$ is \emph{internally filled}, if $[A\cap R]=R$. For a positive integer $L$ we denote by $\cI(L)$ the event that $R(1,1;L,L)$ is internally filled. For a rectangle $R$ we say that $R$ is \emph{occupied}, if $A\cap R\neq\varnothing$ and denote this event by $\cO(R)$ and its complement by $\cO^c(R)$.

The following is our main goal.
\begin{proposition}[Filling probability]
\label{prop:main}
Let $\lambda=\pi^2/6$. There exist $c>0$ and $p_0>0$ such that for all $p\in(0,p_0)$ and integer $B\ge 2p^{-3/4}$
\[\bbP_p(\cI(B))\ge \exp\left(-2\lambda/p+c\log(1/p)/\sqrt p\right).\]
\end{proposition}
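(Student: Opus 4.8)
The plan is to build an explicit efficient growth strategy for internally filling the $B\times B$ square, following the diagonal mechanism of Fig.~\ref{fig:diag} but with carefully chosen deviations as in Fig.~\ref{fig:side}, inserted on every dyadic scale between $1/\sqrt p$ and $B$. Concretely, I would decompose $[1,B]$ into a nested sequence of squares $S_k=R(1,1;L_k,L_k)$ with $L_k$ roughly doubling, starting from $L_{k_0}\approx 1/\sqrt p$ and ending at $L_K=B$, and describe events $\cE_k$, depending on disjoint sets of sites in the annulus $S_{k}\setminus S_{k-1}$, such that $\bigcap_k\cE_k$ together with $\cI(L_{k_0})$ implies $\cI(B)$. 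On scales above the critical one the event $\cE_k$ is the classical ``every line is occupied'' event whose probability contributes the bulk $-2\lambda/p+o(1/p)$ via the computation $-\tfrac12\sum_i\log(1-(1-p)^i)^2\to\lambda/p$ (done separately for the vertical and horizontal halves of each $L$-shaped annulus, hence the factor $2$). On scales between $1/\sqrt p$ and $1/p$, instead of asking each of the $\Theta(L_k)$ successive lines to contain an infection in a fixed column/row, I allow a deviation: a block of $\ell$ consecutive lines may be filled from a single infection placed anywhere in a window of width $\sim\ell$, provided one pays for one ``switching'' infection and the geometry closes up. Here is where the modified model differs from the classical one: a single infection at the turning point feeds growth of \emph{both} a horizontal and a vertical strip, and since in the modified model each line normally needs its own infection, replacing $2\ell$ single-line infections by one switching infection plus $2\ell$ ``free of infection but still fillable'' lines is a net gain once we account for the entropy of the $\sim\ell$ possible positions.

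The core estimate is the per-scale gain. Fix a scale $L=L_k$ with $1/\sqrt p\le L\le 1/p$ and a deviation length $\ell=\ell(L)$ to be optimised. Along the roughly $L/\ell$ deviation steps on this scale, each step contributes a factor of order $p\cdot\ell$ (one switching infection, times $\sim\ell$ choices for its location and the length of the deviation) compared to the factor $(1-(1-p)^{\Theta(L)})^{2\ell}\approx (pL)^{2\ell}$ one would have paid for the $2\ell$ ordinary lines it replaces; I also need the $2\ell$ replaced lines to be fillable, which on a sub-critical scale $L\ll 1/p$ happens with probability bounded below by a constant (the lines are short enough that their being empty is typical and harmless, since the adjacent strip of already-infected sites supplies the second neighbour — this is exactly the third contribution the heuristics section says is subdominant). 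Optimising $\ell$ over the constraint that the total energetic cost of the deviations on scale $L$ not exceed their entropic credit, one finds the admissible range is $\ell \lesssim 1/\sqrt p$ and the net gain per scale is $\Theta(1/\sqrt p)$; since there are $\Theta(\log(1/p))$ dyadic scales between $1/\sqrt p$ and $1/p$, these accumulate to $\Theta(\log(1/p)/\sqrt p)$, which is the claimed correction. (Scales above $1/p$ are handled by the standard argument and contribute only to the main term and to lower-order errors absorbed in the constant, using $B\ge 2p^{-3/4}$ merely to guarantee there are enough sub-critical scales and that boundary effects at scale $1/\sqrt p$ are negligible — the bound $\cI(1/\sqrt p)$ itself holding with probability $\exp(-O(1/\sqrt p))$ by \cite{Gravner08}.)

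I would organise the write-up as: (i) a deterministic lemma saying that a ``crossed'' rectangle with an appropriately placed single infection internally fills an $L$-shaped annulus of width up to $\ell$ in the modified model (pure automaton bookkeeping); (ii) a probabilistic lemma lower-bounding $\bbP_p(\cE_k)$ for one deviation scale by $\exp(-2\lambda_k/p + c_k/\sqrt p)$ where $\lambda_k$ is the relevant partial sum and $c_k>0$ uniform; (iii) the telescoping: all the $\cE_k$ and the seed event $\cI(L_{k_0})$ depend on disjoint regions, so their probabilities multiply, and summing the exponents gives $-2\lambda/p + c\log(1/p)/\sqrt p - (\text{error})$, with the error controlled by the deviation cost on supercritical scales and the $O(1)$ per sub-critical line being below $\log(1/p)/\sqrt p$ in total. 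The main obstacle I anticipate is (i)–(ii) done \emph{simultaneously and with honest constants}: one must choose the deviation geometry so that on the next scale up the partially-grown droplet is still a genuine rectangle (so the induction closes), so that the ``free of infection'' lines really are fillable with constant probability using only the already-infected region as the source of second neighbours, and so that the windows for successive switching infections are disjoint — keeping these three requirements compatible while retaining a strictly positive net gain $c/\sqrt p$ per scale is the delicate point, though, as the heuristics stresses, we make no attempt to optimise $c$, which buys a lot of slack.
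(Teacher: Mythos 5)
Your overall architecture (dyadic scales between $p^{-1/2}$ and the critical length, Gravner--Holroyd sideways deviations inserted on each scale, entropy versus energy bookkeeping, multiplying probabilities of events living on disjoint lines) is the same as the paper's, but the core quantitative estimate rests on a false description of how the modified model grows, and this is a genuine gap. In the modified (or classical) model an empty line adjacent to a fully infected rectangle does \emph{not} fill: each of its sites has an infected neighbour in only one of the two directions, so the line stays empty unless it contains an initial infection or is flanked at a corner. Consequently your claim that the $2\ell$ lines replaced by a deviation only need to be ``fillable, which \dots happens with probability bounded below by a constant'' is wrong, and so is the deterministic lemma (i): a single well-placed infection next to a filled rectangle infects exactly one row and one column (this is precisely the corner mechanism of Definition~\ref{def:sideways}), never an $L$-shaped annulus of width $\ell\ge 2$. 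In any deviation of length $\ell$ at scale $L$ you must still pay roughly $(1-(1-p)^{\Theta(L)})^{2(\ell-1)}\approx (pL)^{2(\ell-1)}$ for the occupied columns and rows; the only saving relative to diagonal growth is that the one switching infection feeds \emph{two} lines, i.e.\ a factor of order $p/(pL)^2$ per deviation, not ``$p\cdot\ell$ versus $(pL)^{2\ell}$''. Your accounting, taken at face value, gives a gain growing linearly in $\ell$, so taking $\ell\approx L$ would improve the leading constant $\lambda$ itself and contradict the lower bound in Eq.~\eqref{eq:GH}; the estimate is therefore not just unoptimised but irreparable without changing the mechanism.

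With the correct per-deviation factor the net gain per deviation is only a constant: entropy of order $(L\sqrt p)^2$ (position and length of the deviation) against energy of order $p/(pL)^2$, with the empty row and the convexity error as lower-order corrections. To accumulate $\Theta(1/\sqrt p)$ per scale one therefore needs $\Theta(1/\sqrt p)$ deviations per scale, of length growing proportionally to $L\sqrt p$, and one must keep the scales small enough (the paper stops at $\ell^{(N)}\approx p^{-3/4}$, whence the hypothesis $B\ge 2p^{-3/4}$) so that the empty-row and convexity costs in Lemma~\ref{lem:energy} remain negligible; this is exactly the content of the paper's choice $m=1/(50\sqrt q)$, $b_i^{(n)}-a_i^{(n)}\in[1,2^n]$. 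Two further points you gloss over: turning the ``entropic credit'' into a lower bound on the single probability $\bbP_p(\cI(B))$ requires the events indexed by different deviation choices to be disjoint (Lemma~\ref{lem:disjoint}), not merely that each event uses disjoint regions; and the bulk term $-2\lambda/p$ is not produced by the supercritical scales alone but by all line lengths up to order $1/p$, which is why the paper compares the sideways event multiplicatively against $\cD_a^b$ on every scale rather than splitting the main term off at the critical length.
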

Assuming Proposition~\ref{prop:main}, it is standard to deduce Theorem~\ref{th:main} by the method of \cite{Aizenman88}.
\begin{proof}[Proof of Theorem~\ref{th:main}]
Fix $c>0$ as in Proposition~\ref{prop:main}. By Proposition~\ref{prop:main} with high probability as $p\to 0$ there exist $x_1,x_2\in\bbZ$ with $\max(|x_1|,|x_2|)\le \exp(\lambda/p-c\log(1/p)/(3\sqrt p))$ such that $\cI(R(x_1,x_2;x_1+B,x_2+B))$ occurs with $B=\lceil p^{-3}\rceil$, since these events are independent for disjoint rectangles. Moreover, by the union bound we have that with high probability for all $x_1,x_2$ as above the events $\cO(R(x_1,x_2;x_1+B,x_2))$ and $\cO(R(x_1,x_2;x_1,x_2+B))$ hold. Yet, if both of these high probability events occur, then $\tau\le (B+1)^2+B\exp(\lambda/p-c\log(1/p)/(3\sqrt p))$. To see this, note that the initial square of side length $B$ becomes infected in time at most $(B+1)^2$, while thereafter in each $B$ steps its height and width grow by (at least) 1 towards the origin.
\end{proof}

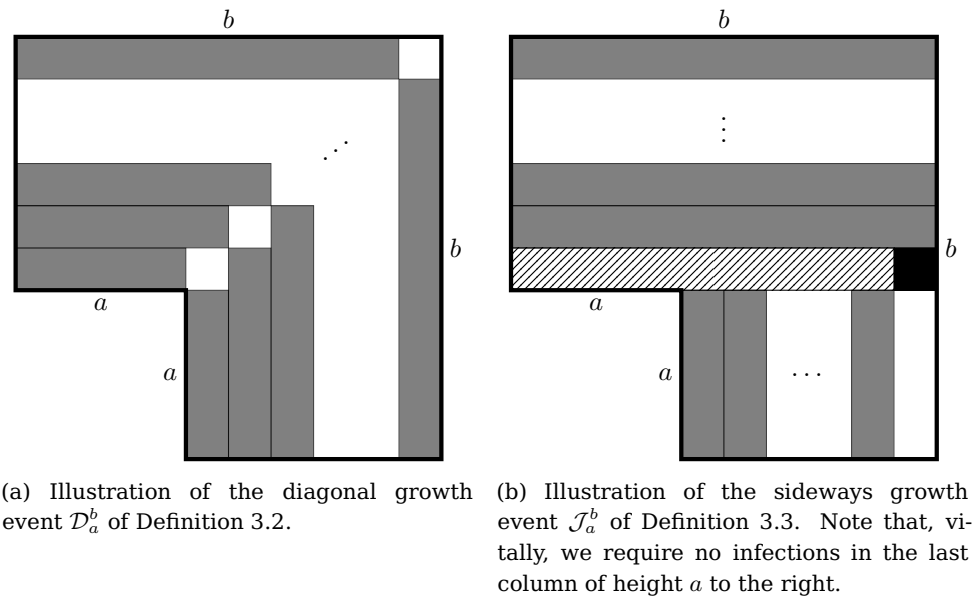
\begin{figure}
\centering
\subcaptionbox{\label{fig:diag}Illustration of the diagonal growth event $\cD_a^b$ of Definition~\ref{def:diagonal}.}
{
        \begin{tikzpicture}[x=0.04\textwidth,y=0.04\textwidth]
        \draw[ultra thick] (4,4)--(4,0)--(10,0)--(10,10)--(0,10)--(0,4)--cycle;
        \foreach \i in {4,5,6,9}
        {
            \draw[fill=black,opacity=0.5] (\i,0) rectangle (\i+1,\i);
            \draw[fill=black,opacity=0.5] (0,\i) rectangle (\i,\i+1);
        }
        \draw (4,2) node[left]{$a$};
        \draw (2,4) node[below]{$a$};
        \draw (10,5) node[right]{$b$};
        \draw (5,10) node[above]{$b$};
        \draw (7.5,7.5) node{$\iddots$};
        \end{tikzpicture}
}%
\quad
\subcaptionbox{\label{fig:side}Illustration of the sideways growth event $\cJ_a^b$ of Definition~\ref{def:sideways}. Note that, vitally, we require no infections in the last column of height $a$ to the right.}
{
    \begin{tikzpicture}[x=0.04\textwidth,y=0.04\textwidth]
        \draw[ultra thick] (4,4)--(4,0)--(10,0)--(10,10)--(0,10)--(0,4)--cycle;
        \draw[fill=black,opacity=0.5] (4,0) rectangle (5,4);
        \draw[fill=black,opacity=0.5] (5,0) rectangle (6,4);
        \draw[fill=black,opacity=0.5] (8,0) rectangle (9,4);
        \draw[fill=black,opacity=0.5] (0,5) rectangle (10,6);
        \draw[fill=black,opacity=0.5] (0,6) rectangle (10,7);
        \draw[fill=black,opacity=0.5] (0,9) rectangle (10,10);
        \draw[pattern=north east lines] (0,4) rectangle (9,5);
        \draw[fill=black] (9,4) rectangle (10,5);
        \draw (4,2) node[left]{$a$};
        \draw (2,4) node[below]{$a$};
        \draw (10,5) node[right]{$b$};
        \draw (5,10) node[above]{$b$};
        \draw (7,2) node{$\dots$};
        \draw (5,8) node{$\vdots$};
        \end{tikzpicture}
        }
    \caption{\label{fig}The two growth mechanisms. Shaded rectangles are required to be occupied; the hatched one is not occupied; the black site is infected.}
\end{figure}

In the remainder of this section we import the relevant parts of \cite{Gravner08}*{Section 3} (see Fig.~\ref{fig}).
\begin{definition}[Diagonal growth]
\label{def:diagonal}
Let $a\le b$ be positive integers. We set
\[\cD_a^b=\bigcap_{i=a+1}^{b}\cO(R(1,i;i-1,i))\cap\cO(R(i,1;i,i-1)).\]
\end{definition}
\begin{definition}[Sideways growth]
\label{def:sideways}
Let $a<b$ be positive integers. We set
\begin{multline*}\cJ_a^b=\bigcap_{i=a+1}^{b-1}\cO(R(i,1;i,a))\cap\cO(R(1,i+1;b,i+1))\\\cap\cO^c(1,a+1;b-1,a+1)\cap \{(b,a+1)\in A\}.\end{multline*}
\end{definition}

\begin{definition}[Alternating growth]
\label{def:alternating}
For positive integers $m$, $(a_i)_{i=1}^{m+1}$ and $(b_i)_{i=1}^{m}$ such that $a_1< b_1\le a_2< b_2\le \dots<b_m\le a_{m+1}$, we further define 
\[\cE(a_1,b_1,\dots,b_m,a_{m+1})=\bigcap_{i=1}^m \cJ_{a_i}^{b_i}\cap\cD_{b_i}^{a_{i+1}}.\]
\end{definition}
Clearly, the events featuring in each of Definitions~\ref{def:diagonal} to~\ref{def:alternating} are independent. Moreover, it is readily checked that $\cI(a)\cap\cD_a^b\subset \cI(b)$ and $\cI(a)\cap\cJ_a^b\subset\cI(b)$ for any positive integers $a< b$ and therefore $\cI(a_1)\cap\cE(a_1,b_1,\dots,b_m,a_{m+1})\subset\cI(a_{m+1})$ for any positive integers $m$ and $a_1<b_1\le\dots <b_m\le a_{m+1}$.

\begin{lemma}
\label{lem:disjoint}
Fix positive integers $a\le b$. Then for different choices of positive integers $a_1< b_1\le\dots<b_m\le a_{m+1}$ with $a_1=a$, $a_{m+1}=b$, the events $\cE(a_1,b_1,\dots,a_{m+1})$ are disjoint.
\end{lemma}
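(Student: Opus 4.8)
The plan is to show that any configuration $A$ in the event $\cE(a_1,b_1,\dots,b_m,a_{m+1})$ determines the entire parameter sequence, so that two distinct admissible sequences (recall that $a_1=a$ and $a_{m+1}=b$ are fixed, while $m$ and the intermediate parameters may vary) yield disjoint events. I would reconstruct the parameters by peeling them off one at a time, alternately reading a value $b_j$ from $a_j$ and a value $a_{j+1}$ from $b_j$, using only local information about $A$.

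For the first step, note that the factor $\cJ_{a_j}^{b_j}$ imposes $\cO^c(1,a_j+1;b_j-1,a_j+1)$ together with $(b_j,a_j+1)\in A$; since $a_j\ge a\ge 1$ and $b_j>a_j$, this says precisely that row $a_j+1$ has its leftmost infected site at column $b_j$. Hence $b_j=\min\{x\ge1:(x,a_j+1)\in A\}$ is a function of $A$ and $a_j$ alone, so if $A$ lies in two events of the given form with a common value $a_j=a_j'$, then automatically $b_j=b_j'$. For the second step, suppose $A$ lies in two such events with $b_j=b_j'$ and, towards a contradiction, $a_{j+1}<a_{j+1}'$. Since $a_{j+1}'\le b$, this forces $a_{j+1}<b$, so in the first sequence $j+1\le m$ and the factor $\cJ_{a_{j+1}}^{b_{j+1}}$ is present; using $b_{j+1}-1\ge a_{j+1}$ it forbids any infection in columns $1,\dots,a_{j+1}$ of row $a_{j+1}+1$. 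On the other hand, $b_j'=b_j\le a_{j+1}<a_{j+1}'$ places $a_{j+1}+1$ in the range $\{b_j'+1,\dots,a_{j+1}'\}$, so the factor $\cD_{b_j'}^{a_{j+1}'}$ of the second sequence contains $\cO(R(1,a_{j+1}+1;a_{j+1},a_{j+1}+1))$, i.e.\ it requires an infection among exactly those sites — a contradiction. Thus $a_{j+1}=a_{j+1}'$.

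Finally I would run these two steps as an induction starting from $a_1=a_1'=a$: as long as $j\le\min(m,m')$ the two steps give $b_j=b_j'$ and then $a_{j+1}=a_{j+1}'$, and the induction stops once we reach an index $j$ with $a_j=a_j'=b$. The only point needing a little care is checking that the two sequences run out at the same time, i.e.\ that $m=m'$; but this follows from the elementary observation that the ordering $a_i<b_i\le a_{i+1}\le\dots\le a_{m+1}=b$ makes $a_i=b$ impossible unless $i=m+1$, so an index $j$ with $a_j=a_j'=b$ must equal $m+1$ and $m'+1$ simultaneously. I do not foresee a genuine obstacle: the argument is purely combinatorial, and the only thing to keep straight is exactly which rows and columns each factor of $\cE$ constrains.
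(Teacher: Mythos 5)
Your proof is correct and takes essentially the same route as the paper, which establishes disjointness by reconstructing the parameters from the configuration: $b_i$ as the leftmost infection in row $a_i+1$ (your step 1 verbatim) and $a_{i+1}$ as the smallest $a'\ge b_i$ whose row $a'+1$ is free of infections in columns $1,\dots,a'$, which is exactly the clash you exploit between the $\cO^c$ part of $\cJ_{a_{i+1}}^{b_{i+1}}$ and the $\cO(R(1,a'+1;a',a'+1))$ part of $\cD_{b_i}^{a_{i+1}}$, merely phrased there as an explicit reconstruction formula rather than a pairwise contradiction. Your explicit check that the two sequences terminate together ($m=m'$) is a detail the paper leaves implicit but does not alter the argument.
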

\begin{proof}
Given a realisation $A\in\cE(a_1,b_1,\dots,a_{m+1})$, one can check by induction that for all integer $i\in[1,m]$
\begin{align*}
b_{i}&{}=\min\left\{b'>a_i:(b',a_i+1)\in A\right\},\\
a_{i+1}&{}=\min\left\{a'\ge b_i:A\in\cO^c\left(R\left(1,a'+1;a',a'+1\right)\right)\right\},\end{align*}
with $\min\varnothing=b$, so the sequences are uniquely determined.
\end{proof}

\section{Proof of Proposition~\ref{prop:main}}
In this section we develop our multi-scale strategy for proving Proposition~\ref{prop:main}. Following \cite{Holroyd03}, set $q=-\log(1-p)$ and
\[
f:(0,\infty)\to(0,\infty):z\mapsto-\log(1-e^{-z}).\]
The function $f$ is $\cC^\infty$, decreasing and convex. Set 
\begin{align*}
N&{}=\left\lceil\log (1/p)/(4\log 2)\right\rceil,&
m&{}=1/\left(50\sqrt{q}\right)
\end{align*}
and assume for simplicity that $m$ is an integer. It will be convenient to proceed scale by scale, so let us introduce $\ell^{(n)}=2^n/\sqrt{q}$
for any integer $n\in[0,N]$. We will use the event from Definition~\ref{def:alternating} with sequences $(a_i^{(n)})_{i=1}^{m+1}$ and $(b_i^{(n)})_{i=1}^{m}$ such that \[\ell^{(n)}=a_1^{(n)}<b_1^{(n)}\le a_2^{(n)}<b_2^{(n)}\le\dots<b_{m}^{(n)}\le a_{m+1}^{(n)}=\ell^{(n+1)}.\]
We call such sequences \emph{good}, if for all $n\in[0,N)$ and $i\in[1,m]$ it holds that $b^{(n)}_i-a^{(n)}_i\in[1,2^n]$. It is important to note that, contrary to what would be best for non-modified bootstrap percolation, the number of terms on each scale does not increase with the scale.

Let us first assess the entropy, that is, the number of good sequences.
\begin{lemma}[Entropic gain]
\label{lem:entropy}
There are at least $(12\cdot2^N)^{N(m-1)}$ good sequences.
\end{lemma}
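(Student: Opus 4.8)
The quantity to estimate is the number of good sequences $(a_i^{(n)})_{i=1}^{m+1}$, $(b_i^{(n)})_{i=1}^{m}$, where "good" means: on each scale $n \in [0,N)$, the sequence runs from $a_1^{(n)} = \ell^{(n)}$ to $a_{m+1}^{(n)} = \ell^{(n+1)}$, strictly/weakly interleaved as $a_1 < b_1 \le a_2 < \dots < b_m \le a_{m+1}$, with the deviation lengths $b_i^{(n)} - a_i^{(n)}$ all lying in $[1, 2^n]$. Since the choices on different scales $n$ are made independently, the total count is a product over $n$, so it suffices to show each scale $n$ contributes a factor at least $(12 \cdot 2^N)^{m-1}$, and then multiply over the $N$ scales $n = 0, \dots, N-1$. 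Let me reconsider: $N$ scales each contributing $(12\cdot 2^N)^{m-1}$ gives $(12\cdot 2^N)^{N(m-1)}$, matching the claim — so I need a per-scale lower bound of $(12 \cdot 2^N)^{m-1}$.

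So first I would fix a scale $n$ and count. I want to produce many valid configurations cheaply. The cleanest approach: fix each $b_i^{(n)} - a_i^{(n)} = 1$ (the minimal allowed deviation, which is always in $[1,2^n]$ since $n\ge 0$), so that only the $a_i^{(n)}$ need to be chosen; these must satisfy $\ell^{(n)} = a_1^{(n)} < a_1^{(n)}+1 = b_1^{(n)} \le a_2^{(n)} < \dots$, i.e.\ $a_1^{(n)} < a_2^{(n)} < \dots < a_{m+1}^{(n)} = \ell^{(n+1)}$ with $a_{i+1}^{(n)} \ge a_i^{(n)}+1$ — that is, we just need a strictly increasing sequence of $m-1$ free integers $a_2^{(n)}, \dots, a_m^{(n)}$ strictly between $\ell^{(n)}$ and $\ell^{(n)} = \ell^{(n+1)}$, actually between $\ell^{(n)}+1$ and $\ell^{(n+1)}-1$. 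Wait — I should be slightly more careful with whether the interleaving forces $b_i \le a_{i+1}$ to be compatible; with all gaps equal to $1$ this reduces to choosing $m-1$ integers out of an interval of length roughly $\ell^{(n+1)} - \ell^{(n)} = 2^n/\sqrt q$, giving $\binom{\ell^{(n+1)}-\ell^{(n)}-2}{m-1}$ such sequences. Then I use the bound $\binom{K}{r} \ge (K/r)^r$ with $K \approx 2^n/\sqrt q$ and $r = m-1$. Since $m = 1/(50\sqrt q)$, we get $K/r \ge (2^n/\sqrt q)/(1/(50\sqrt q)) \cdot (1 + o(1)) = 50 \cdot 2^n (1+o(1)) \ge 2^n \ge 12 \cdot 2^N$ — hold on, that last step fails for small $n$, since $2^n$ can be much smaller than $2^N$.

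The main obstacle is therefore the small scales: for $n$ close to $0$, the interval $[\ell^{(n)}, \ell^{(n+1)}]$ has length only $2^n/\sqrt q$, which for $n=0$ is just $1/\sqrt q \approx 1/\sqrt p$, far too short to fit $m-1 \approx 1/(50\sqrt q)$ well-separated points with room to spare for a factor $2^N \approx 1/p^{1/4}$ per point. The resolution must be that one does \emph{not} insist on a good per-scale bound but rather only needs the \emph{product} over all $n \in [0,N)$ to beat $(12\cdot 2^N)^{N(m-1)}$; the large scales $n$ near $N$ (where $2^n \approx 2^N$, and indeed one can take larger deviations $b_i - a_i$ up to $2^n$, not just $1$, multiplying the count by an extra $(2^n)^{m}$-ish factor) overcompensate for the deficit at small scales. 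So the correct plan is: bound the count on scale $n$ from below by something like $(2^{n})^{c' m}$ for an appropriate constant (using both the freedom in the $a_i$'s and the freedom $b_i - a_i \in [1,2^n]$, the latter contributing a clean factor $(2^n)^{m}$ with no combinatorial subtlety), then take the product $\prod_{n=0}^{N-1} (2^n)^{c'm} = 2^{c' m \sum_{n=0}^{N-1} n} = 2^{c' m N(N-1)/2}$, and compare with $(12 \cdot 2^N)^{N(m-1)} \le (2^{N+4})^{Nm} = 2^{N m (N+4)}$; since $mN(N-1)/2$ vs.\ $mN(N+4)$ differ only by a constant factor in the exponent, one adjusts $N$ or the constants ($N = \lceil \log(1/p)/(4\log 2)\rceil$ is chosen precisely so these balance) to conclude. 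I would carry this out by: (i) writing the per-scale count as (number of admissible $(a_i)$) $\times$ (product over $i$ of $|[1,2^n]|$) $\ge 1 \cdot (2^n)^{m} \ge (2^n)^{m-1}$; actually even more crudely, just bound the per-scale count below by $(2^n)^{m-1}$ using only the deviation-length freedom when the interval is long enough to place the $a_i$'s greedily with unit gaps (which holds since $\ell^{(n+1)} - \ell^{(n)} = 2^n/\sqrt q \ge m$ for $p$ small); (ii) multiplying to get $\prod_n (2^n)^{m-1} = 2^{(m-1)N(N-1)/2}$; (iii) checking $2^{(m-1)N(N-1)/2} \ge (12\cdot 2^N)^{N(m-1)}$, i.e.\ $(N-1)/2 \ge N + \log_2 12$, i.e.\ $N \ge 2N + 2\log_2 12 + \dots$ — this is false, so in fact one genuinely needs the extra combinatorial factor from choosing the $a_i$'s freely on the large scales. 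The honest plan: on scale $n$, count at least $\binom{\lfloor 2^n/\sqrt q\rfloor - 2}{m-1} \cdot (2^n)^{m-1} \ge (2^{n}/(2m\sqrt q))^{m-1}(2^n)^{m-1} \ge (2^n \cdot 25)^{m-1}(2^n)^{m-1} = (25 \cdot 4^n)^{m-1}$ for the scales with $n$ not too small, handle the $O(1)$ smallest scales separately (they contribute at least $1$ each and there are only boundedly many of them relative to $N$), multiply to get $\prod_{n} (25\cdot 4^n)^{m-1} \ge 4^{(m-1)\sum n} = 2^{(m-1)N(N-1)}$ up to constants, and now $2^{(m-1)N(N-1)} \ge (12\cdot 2^N)^{N(m-1)}$ holds once $N-1 \ge N + \log_2 12$ is replaced by the true inequality $N(N-1) \ge N(N + \log_2(12) + 1)$, which still needs care — so ultimately the genuinely load-bearing point is that the $b_i - a_i \le 2^n$ freedom and the $a_i$-placement freedom \emph{together} give roughly $(4^n)^{m-1} \approx (4^N)^{m-1}$ on the top scales, and $\sum_{n=0}^{N-1}$ of the exponents $n$ grows like $N^2/2$ whereas the target exponent grows like $N \cdot N$, so the constant $12$ and the choice $N \asymp \log(1/p)$ must be juggled; I expect the write-up to simply choose constants so that the top $\Theta(N)$ scales alone already furnish the full $(12\cdot 2^N)^{N(m-1)}$, discarding all small scales, which sidesteps the delicate summation entirely.

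In short: \textbf{the approach} is to count scale-by-scale using the independence across scales, lower-bounding the count on scale $n$ by a binomial coefficient (for placing the $a_i^{(n)}$) times $(2^n)^{m-1}$ or $(2^n)^{m}$ (for the deviation lengths $b_i^{(n)}-a_i^{(n)} \in [1,2^n]$), applying $\binom{K}{r}\ge (K/r)^r$ with $K \asymp 2^n/\sqrt q$ and $r = m-1 \asymp 1/\sqrt q$; \textbf{the key steps} are (1) reduce to a per-scale count via independence, (2) bound the per-scale count using the deviation-length freedom (clean, no combinatorics) and optionally the $a_i$-placement freedom, (3) sum the exponents over $n \in [0,N)$ and compare with the target using the definition $N = \lceil\log(1/p)/(4\log 2)\rceil$; \textbf{the main obstacle} is that the smallest scales ($n$ near $0$) have intervals of length only $\asymp 2^n/\sqrt q$ which cannot accommodate a factor $2^N$ per index, so one must either discard them (keeping only the top $\Theta(N)$ scales, where $2^n \gtrsim 2^N$, which already suffice) or carefully track that the superlinear-in-$n$ growth of the per-scale exponent compensates — the former being cleaner and, I expect, what the authors do.
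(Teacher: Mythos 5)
Your core counting scheme --- per scale, place the $a_i^{(n)}$ via a binomial coefficient, choose the deviation lengths $b_i^{(n)}-a_i^{(n)}\in[1,2^n]$ freely, and multiply over the $N$ scales --- is exactly the paper's proof. But you do not close the argument, and the closure you guess at is wrong. The gap is in your final comparison: after correctly obtaining a per-scale factor of order $(25\cdot 4^n)^{m-1}$ you drop the constant (``up to constants'') and are left comparing $2^{(m-1)N(N-1)}$ with $(12\cdot 2^N)^{N(m-1)}=2^{(m-1)N(N+\log_2 12)}$, which indeed fails. The missing observation is that the constant is load-bearing: keeping it, $\prod_{n=0}^{N-1}\bigl(25\cdot 4^n\bigr)^{m-1}=\bigl(25\cdot 2^{N-1}\bigr)^{N(m-1)}=\bigl(12.5\cdot 2^N\bigr)^{N(m-1)}\ge\bigl(12\cdot 2^N\bigr)^{N(m-1)}$, and this constant comes precisely from the choice $m=1/(50\sqrt q)$, which makes $(\ell^{(n+1)}-\ell^{(n)})/m\approx 50\cdot 2^n$. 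That is the entire content of the paper's computation: the per-scale factor is $(50\cdot 2^{2n-1})^{m-1}$, whose geometric mean over $n=0,\dots,N-1$ is $50\cdot 2^{N-2}=12.5\cdot 2^N$.

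Your fallback --- choose constants so that the top $\Theta(N)$ scales alone furnish the full $(12\cdot 2^N)^{N(m-1)}$, discarding the small scales --- cannot work. Even using all $N$ scales, the $4^n$ factors alone contribute exponent $\sum_{n<N}2n=N(N-1)<N(N+\log_2 12)$; the deficit is covered by the per-scale constant $25$ only because $\log_2 25-1\ge\log_2 12$, i.e.\ $12.5\ge 12$, a razor-thin margin. Discarding any constant fraction of the scales loses a term of order $N^2$ in the exponent (the top half of the scales gives only about $\bigl(12.5\cdot 2^{3N/2}\bigr)^{(N/2)(m-1)}$, far below the required $\bigl(144\cdot 4^{N}\bigr)^{(N/2)(m-1)}$), so summing over \emph{all} scales is genuinely needed rather than a technicality to be sidestepped. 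A minor additional point: if you place the $a_i^{(n)}$ with unit gaps you may not then choose $b_i^{(n)}-a_i^{(n)}$ up to $2^n$ freely, since goodness forces $b_i^{(n)}\le a_{i+1}^{(n)}$; the paper handles this by requiring consecutive $a_i^{(n)}$ to differ by at least $2^n$, hence the binomial $\binom{\ell^{(n+1)}-\ell^{(n)}-m2^n}{m-1}$, which costs only a factor $1-1/50$ of the interval length and does not affect the conclusion.
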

\begin{proof}
Choosing the $a_i^{(n)}$ first so that they differ by at least $2^n$, we get that the number of sequences is at least
\begin{align*}
\prod_{n=0}^{N-1}\binom{\ell^{(n+1)}-\ell^{(n)}-m2^n}{m-1}2^{nm}&{}\ge \prod_{n=0}^{N-1} \frac{(2^{n-1}/\sqrt q)^{m-1}}{m^{m-1}}2^{nm}\\
&{}\ge \prod_{n=0}^{N-1}\left(50\cdot 2^{2n-1}\right)^{m-1}\\&{}= \left(50\cdot2^{N-2}\right)^{N(m-1)}.\qedhere\end{align*}
\end{proof}

We next evaluate the energy cost of sideways growth.
\begin{lemma}[Energy cost]
\label{lem:energy}
Fix positive integers $n\in[0,N)$ and $\ell^{(n)}\le a<b\le \ell^{(n+1)}$ such that $b-a\le 2^n$. Then for any $p<1/2$
\[\frac{\bbP_p(\cJ_a^b)}{\bbP_p(\cD_a^b)}\ge \frac{\exp(-2^{n+2}\sqrt{q})}{2^{2n+3}}.\]
\end{lemma}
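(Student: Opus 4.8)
The strategy is to compute both probabilities explicitly from the product structure of the events in Definitions~\ref{def:diagonal} and~\ref{def:sideways}, and then bound the ratio crudely using $b-a\le 2^n$ and $a,b\in[\ell^{(n)},\ell^{(n+1)}]$. First I would write $\bbP_p(\cD_a^b)=\prod_{i=a+1}^b(1-(1-p)^{i-1})^2$, since $\cD_a^b$ is the intersection over $i\in[a+1,b]$ of the two independent occupation events $\cO(R(1,i;i-1,i))$ and $\cO(R(i,1;i,i-1))$, each of which involves $i-1$ independent sites and so has probability $1-(1-p)^{i-1}$. For $\cJ_a^b$ one similarly factorises: the columns $\cO(R(i,1;i,a))$ for $i\in[a+1,b-1]$ contribute $\prod_{i=a+1}^{b-1}(1-(1-p)^a)$, the rows $\cO(R(1,i+1;b,i+1))$ for $i\in[a+1,b-1]$ contribute $\prod_{i=a+1}^{b-1}(1-(1-p)^b)$, the empty-row event $\cO^c(R(1,a+1;b-1,a+1))$ contributes $(1-p)^{b-1}$, and the single infected site $(b,a+1)\in A$ contributes a factor $p$. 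Hence $\bbP_p(\cJ_a^b)=p(1-p)^{b-1}\prod_{i=a+1}^{b-1}(1-(1-p)^a)(1-(1-p)^b)$.

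Next I would form the ratio. Cancelling as much as possible, the numerator loses one factor $(1-(1-p)^{i-1})^2$ relative to the full product (roughly the $i=b$ term), each surviving diagonal factor $(1-(1-p)^{i-1})^2$ with $a<i<b$ is replaced by $(1-(1-p)^a)(1-(1-p)^b)$, and we gain the prefactor $p(1-p)^{b-1}$. Since $1-(1-p)^a\ge 1-(1-p)^b$ for $a\le b$ and the indices $i-1$ all lie in $[a,b-1]\subset[\ell^{(n)},\ell^{(n+1)}]$, each replaced factor changes by a bounded multiplicative amount; using $1-(1-p)^{i-1}\le 1$ and $(1-(1-p)^a)/(1-(1-p)^{i-1})\ge$ (something like) $2^{-1}$ over this range — here one uses that $a$ and $i-1$ differ by at most $2^n$ while both are of order $2^n/\sqrt q$, so their ratio of "effective masses" $1-e^{-q\cdot}$ is bounded below — one collects the $b-a\le 2^n$ such factors into at most $2^{-2n}$ or so. The dropped $i=b$ term is at least $(1-(1-p)^{\ell^{(n)}})^2\ge (q\ell^{(n)}/2)^2$-type quantity which is $\ge c/2^{?}$; being generous one bounds it below by a constant times $q(\ell^{(n)})^2\ge$ something absorbed into the powers of $2$. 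Finally $p(1-p)^{b-1}\ge p e^{-q b}\ge p e^{-q\ell^{(n+1)}}= p\exp(-2^{n+1}\sqrt q)$, and $p\ge q/2$ for $p<1/2$, say $p\ge\sqrt q\cdot\sqrt q/2$; combining the exponential factor $\exp(-2^{n+1}\sqrt q)$ with a second such factor coming from the crude lower bounds on the replaced diagonal terms gives the claimed $\exp(-2^{n+2}\sqrt q)$, while all the polynomial-in-$2^n$ losses are absorbed into the denominator $2^{2n+3}$.

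The main obstacle, and the only place requiring care, is the bookkeeping of the $b-a$ replaced factors: one must show that $\prod_{a<i<b}\frac{(1-(1-p)^a)(1-(1-p)^b)}{(1-(1-p)^{i-1})^2}$ together with the single dropped term and the factor $p$ does not cost more than $\exp(-2^{n+2}\sqrt q)/2^{2n+3}$. The clean way is to note $1-(1-p)^k=1-e^{-qk}$ is increasing and concave in $k$, so for $k\in[\ell^{(n)},\ell^{(n+1)}]$ one has $1-e^{-qk}\ge (1-e^{-q\ell^{(n)}})\ge \tfrac12 q\ell^{(n)}\wedge$-type lower bounds and $\le 1$; since $q\ell^{(n)}=2^n\sqrt q$, the worst case is $n=0$ where $q\ell^{(0)}=\sqrt q$ is small, giving each factor $\ge\sqrt q/2$, but there are only $b-a\le 1$ of them then, and more generally the product of the $b-a\le 2^n$ reciprocals is at most $(2/(q\ell^{(n)}))^{2\cdot 2^n}$, which one checks is dominated by $\exp(2^{n+1}\sqrt q)$ for $p$ small because $2^n\log(2/(2^n\sqrt q))\ll 2^n/\sqrt q\cdot\sqrt q = 2^n\sqrt q\cdot\sqrt q^{-1}$... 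I would instead avoid this by using the trivial bound $1-(1-p)^{i-1}\ge 1-(1-p)^a$ directly (since $i-1\ge a$), so the ratio of each diagonal factor to $(1-(1-p)^a)(1-(1-p)^b)$ is at most $(1-(1-p)^a)^{-1}(1-(1-p)^b)^{-1}\cdot(1-(1-p)^{i-1})^2\le (1-(1-p)^a)/(1-(1-p)^b)\le e^{qb}/(q a)$-free — cleaner: $(1-(1-p)^a)(1-(1-p)^b)/(1-(1-p)^{i-1})^2\ge (1-(1-p)^b)^2/1 = (1-e^{-qb})^2\ge(1-e^{-2^{n+1}\sqrt q})^2$, wait that is an upper bound on the denominator. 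The point is that each replaced factor is bounded below by $(1-e^{-q\ell^{(n+1)}})^2/1$; raising to the power $b-a\le 2^n$ and using $1-e^{-x}\ge x/2$ for $x\le 1$ (valid when $2^{n+1}\sqrt q\le 1$, i.e.\ for our range of $n$ and small $p$) yields $(q\ell^{(n)})^{2\cdot 2^n}\cdot(\text{const})$, and one checks $\log$ of this is $-2^{n+1}\log(1/(2^n\sqrt q))\cdot 2^? $ which is $o(2^n/\sqrt q)$ hence absorbed; together with $p(1-p)^{b-1}\ge p\,e^{-2^{n+1}\sqrt q}$ and $p\ge \sqrt q$ (for $p$ small) and the dropped term $\ge(1-e^{-q\ell^{(n)}})^2$ this gives $\bbP_p(\cJ_a^b)/\bbP_p(\cD_a^b)\ge \exp(-2^{n+2}\sqrt q)\cdot(\text{polynomial in }2^{-n})\ge \exp(-2^{n+2}\sqrt q)/2^{2n+3}$ once $p$ is small enough that the polynomial factors beat $2^{-2n-3}$ — and since the bound is required only for the stated $p<1/2$ range I would in fact just be more wasteful in the exponent to make it hold uniformly. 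The only genuinely delicate inequality is thus $1-e^{-x}\ge x/2$ on the relevant range together with the elementary $\log(1/x)\ll 1/x$; everything else is multiplicative bookkeeping.
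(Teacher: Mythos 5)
Your setup is fine: the product formulas $\bbP_p(\cD_a^b)=\prod_{i=a}^{b-1}(1-(1-p)^{i})^2$ and $\bbP_p(\cJ_a^b)=p(1-p)^{b-1}\bigl((1-(1-p)^a)(1-(1-p)^b)\bigr)^{b-a-1}$ are exactly the paper's starting point (Eq.~\eqref{eq:PD} and the line after it), and comparing factor by factor is the right plan. But the quantitative heart of the lemma is missing. Both bounds you propose for the $b-a-1$ replaced factors are far too lossy: the ``each ratio is $\ge 1/2$'' bound (from $a/b\ge 1/2$) gives a total loss of order $2^{-2^n}$, not $2^{-2n}$ as you write, and the ``each factor $\ge(1-e^{-q\ell^{(n)}})^2\approx(2^{n}\sqrt q)^2$'' bound gives a loss of order $(2^n\sqrt q)^{2^{n+1}}=\exp\bigl(-2^{n+1}\log\frac{1}{2^n\sqrt q}\bigr)$. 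Since $\sqrt q\to 0$ while $\log\frac{1}{2^n\sqrt q}\ge\frac14\log(1/q)$ on the relevant range $2^n\le 2^N\sim p^{-1/4}$, both losses are exponentially smaller than the required $\exp(-2^{n+2}\sqrt q)/2^{2n+3}$; they are not ``absorbed'' by anything in the statement, and a lemma weakened to such a bound would also kill the multiscale computation in the proof of Proposition~\ref{prop:main} (a cost $\exp(-cm2^n\log(1/q))$ per scale swamps the entropy gain $\approx(2^{2n})^{m}$). The missing idea is the paper's convexity estimate for $f(z)=-\log(1-e^{-z})$: since $-f'(z)\le 1/z$, one has $f(aq)-f(bq)\le(b-a)q\cdot(-f'(aq))\le(b-a)/a\le 2^n/\ell^{(n)}=\sqrt q$, so each replaced pair $(1-(1-p)^a)(1-(1-p)^b)$ costs only a factor $e^{-O(\sqrt q)}$ relative to $(1-(1-p)^{i})^2$, and the total replacement cost is $e^{-O(2^n\sqrt q)}$ — this is where the specific form $\exp(-2^{n+2}\sqrt q)$ comes from.

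Your bookkeeping of the remaining factors is also reversed at the decisive spot. The single unmatched diagonal factor sits in the \emph{denominator} of $\bbP_p(\cJ_a^b)/\bbP_p(\cD_a^b)$, so what you need is an \emph{upper} bound on it, namely $(1-e^{-qi})^2\le(q\ell^{(n+1)})^2=2^{2n+2}q$ (equivalently $f\ge-\log$), and this is precisely the gain of order $1/(2^{2n+2}q)$ that, combined with the correct inequality $p\ge q/2$ for $p<1/2$, produces the $2^{-2n-3}$ in the statement. You instead lower-bound this dropped term and invoke $p\ge\sqrt q$, which is false (for small $p$ one has $p\approx q\ll\sqrt q$); without the $1/q$ gain the prefactor $p$ alone cannot reach $2^{-2n-3}\ge\Theta(\sqrt p)$, so the argument as written cannot close even apart from the replacement-factor issue.
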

\begin{proof}
Definitions~\ref{def:diagonal} and~\ref{def:sideways} give
\begin{align}
\label{eq:PD}\bbP_p\left(\cD_a^b\right)&{}=\exp\left(-2\sum_{i=a}^{b-1}f(iq)\right),\\
\nonumber\bbP_p\left(\cJ_a^b\right)&{}=p\exp\left(-\left(b-a-1\right)\left(f(aq)+f(bq)\right)\right)\exp\left(-q(b-1)\right)\\
\nonumber&{}\ge p\exp\Big(2f\left(q\ell^{(n+1)}\right)-q\ell^{(n+1)}
-2(b-a)\left(f(bq)-(b-a\right)qf'\left(\ell^{(n)}q\right)\Big)\\
\nonumber&{}\ge p\exp\left(2f\left(2^{n+1}\sqrt{q}\right)-2^{n+1}\sqrt{q}-2q(b-a)^2/\left(\ell^{(n)}q\right)\right)\bbP_p\left(\cD_a^b\right)\\
\nonumber&{}\ge p\exp\left(-2\log\left(2^{n+1}\sqrt{q}\right)-2^{n+2}\sqrt{q}\right)\bbP_p\left(\cD_a^b\right)\end{align}
since $f(z)=-\log(1-e^{-z})\ge -\log z$ is decreasing and convex and $f'(z)=1/(1-e^{z})\ge -1/z$. Since $q<2p$ for $p<1/2$, this concludes the proof.
\end{proof}

\begin{proof}[Proof of Proposition~\ref{prop:main}]
Fix $p$ small enough and $B\ge \ell^{(N)}$. 
By Lemmas~\ref{lem:disjoint} to~\ref{lem:energy} we have
\begin{align*}\bbP_p\left(\cI(B)\right)\ge{}& p\bbP_p\left(\cD_{1}^{\ell^{(0)}}\right)\bbP_p\left(\cD_{\ell^{(0)}}^{\ell^{(N)}}\right)\bbP_p\left(\cD_{\ell^{(N)}}^B\right)\left(12\cdot 2^{N}\right)^{N(m-1)}\prod_{n=0}^{N-1}\frac{\exp\left(-m2^{n+2}\sqrt{q}\right)}{2^{m(2n+3)}}\\
\ge{}& \bbP_p\left(\cD_{1}^{\infty}\right)e^{-p^{-1/3}}2^{N(m-1)}\ge\exp\left(2q^{-1}\int_0^\infty f-\frac{\log(1/p)}{300\sqrt{p}}\right),\end{align*}
where we noted that $\bbP_p(\cD_a^b)\bbP_p(\cD_b^d)=\bbP_p(\cD_a^d)$ for any positive integers $a\le b\le d$ by Definition~\ref{def:diagonal} and used Eq.~\eqref{eq:PD}. This concludes the proof, since $\lambda=\int_0^\infty f$ \cite{Holroyd03}.
\end{proof}




\providecommand{\bysame}{\leavevmode\hbox to3em{\hrulefill}\thinspace}
\providecommand{\MR}{\relax\ifhmode\unskip\space\fi MR }
\providecommand{\MRhref}[2]{%
  \href{http://www.ams.org/mathscinet-getitem?mr=#1}{#2}
}
\providecommand{\href}[2]{#2}


\begin{acks}
We thank Augusto Teixeira for several enlightening discussions and IMPA, where this work was initiated, for the hospitality.
\end{acks}


\end{document}